\theoremstyle{definition}
\newtheorem{theorem}{Theorem}
\newtheorem{cor}[theorem]{Corollary}
\DeclareMathOperator{\si}{si}
\DeclareMathOperator{\id}{id}
\newcommand{\bY}{\begin{Young}}
\newcommand{\eY}{\end{Young}}
\title{Factorization of banded permutations}
\author{Greta Panova}
\address{Written while the author was at Harvard University, Department of Mathematics}
\thanks{Supported by a Simons Postoctoral Fellowship, University of California Los Angeles}
\date{\today}
\begin{document}

\begin{abstract}
We consider the factorization of permutations into bandwidth 1 permutations, which are products of mutually nonadjacent simple transpositions. We exhibit an upper bound on the minimal number of such factors and thus prove a conjecture of Gilbert Strang: a banded permutation of bandwidth $w$ can be represented as the product of at most $2w-1$ permutations of bandwidth 1. An analogous result holds also for infinite and cyclically banded permutations.
 \end{abstract}
\maketitle
\textbf{Journal:} \emph{Proceedings of the American Mathematical Society}.
\section{Introduction}\label{section:intro}

Computational efficiency very often requires us to represent matrices as products of certain special, easily computable, matrices using as few factors  as possible. Matrices of bounded bandwidth are often seen in practical applications. In \cite{Strang1} and \cite{Strang2} Gilbert Strang shows that when a matrix and its inverse are of bandwidth $w$, it can always be represented as a product of $O(w^2)$ such matrices of bandwidth $w=1$. In particular this bound is independent of the size of the given matrix. He also conjectures that for permutation matrices this bound is actually $2w-1$. In this paper we will prove this conjecture. This conjecture has been proven independently later also by Albert,Li,Strang and Yu in \cite{AlbertLiStrangYu} and Ezerman and Samson in \cite{SamsonEzerman}.

A \textbf{matrix of bandwidth $w$} is a matrix $A$, whose nonzero entries lie within distance $w$ from the main diagonal: $A_{i,j}=0$ whenever $\,|\,i-j\,|\,>w$. In particular, a banded permutation matrix $P$ is a $0-1$ matrix with exactly one $1$ in each row and column and such that $P_{i,j}=0$ if $\,|\,i-j\,|\,>w$. The matrix $P$ corresponds to the permutation $\pi$ defined as $\pi_i=j$ for $P_{i,j}=1$ and vice versa. So $\pi$ is of width $w$ if $\,|\,\pi_i-i\,|\, \leq w$ for every $i$. 

Our main result is the following.
\begin{theorem}\label{mainthrm}
For any permutation $\pi$, let $M=\{\pi_j-i| i<j, \pi_i>\pi_j\}$ and let $m=\#M$ be the number of different elements in the set $M$. Then there exist at most $m$ bandwidth 1 permutations $\rho^j$, i.e. $\,|\, \rho^j_i-i\,|\,\leq 1$, such that $\pi=\rho^1\rho^2\cdots$. 
\end{theorem}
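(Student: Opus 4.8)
The plan is to prove the equivalent statement that every permutation $\pi$ is a product of at most $m=\#M$ bandwidth-1 permutations, by induction on $m$. If $m=0$ there are no inversions, so $\pi$ is the identity; if $m=1$ one checks directly — using the extremal lemmas below together with a short counting argument — that $\pi$ is itself a bandwidth-1 permutation, i.e.\ a product of mutually nonadjacent simple transpositions. So from now on assume $m\ge 2$.

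The two facts I would establish first are local extremal properties of the inversions realizing the endpoints of $M$: (i) $\min M$ is realized by an \emph{adjacent} inversion, i.e.\ there is $k$ with $\pi_k>\pi_{k+1}$ and $\pi_{k+1}-k=\min M$; and (ii) $\max M$ is realized by a \emph{consecutive-value} inversion, i.e.\ there are $i<j$ with $\pi_i=\pi_j+1$ and $\pi_j-i=\max M$. Both follow from a one-step exchange. If an inversion $(i,j)$ realizing $\min M$ had $j>i+1$, then comparing $\pi_{i+1}$ with $\pi_j$ shows that either $(i+1,j)$ or $(i,i+1)$ is an inversion whose value is strictly smaller than $\min M$, a contradiction; symmetrically, if an inversion $(i,j)$ realizing $\max M$ had $\pi_i>\pi_j+1$, then inserting the value $\pi_i-1$ (comparing $i$ and $j$ with $\pi^{-1}(\pi_i-1)$) produces an inversion whose value exceeds $\max M$.

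Next let $\mu$ be the product of the $s_k$ over all $k$ with $\pi_k>\pi_{k+1}$ and $\pi_{k+1}-k=\min M$, and let $\lambda$ be the product of the $s_v$ over all values $v$ with $\pi^{-1}(v+1)<\pi^{-1}(v)$ and $v-\pi^{-1}(v+1)=\max M$. In each product two consecutive indices would force an impossible inequality (for $\mu$: $\pi_{k+1}>\pi_{k+1}+1$; for $\lambda$: $\pi^{-1}(v+1)+1<\pi^{-1}(v+1)$), so the indices are pairwise nonadjacent and $\lambda,\mu$ are bandwidth-1 permutations, both nontrivial by (i)--(ii). Right multiplication by $\mu$ resolves the chosen minimal adjacent inversions (swapping those pairs of adjacent positions), left multiplication by $\lambda$ resolves the chosen maximal consecutive-value inversions (swapping those pairs of consecutive values), and the heart of the proof is the inequality
\[
\#M(\lambda\pi\mu)\ \le\ \#M(\pi)-2 .
\]
Granting it, $\lambda\pi\mu$ is by the induction hypothesis a product of at most $m-2$ bandwidth-1 permutations; since $\lambda$ and $\mu$ are involutions, $\pi=\lambda\cdot(\lambda\pi\mu)\cdot\mu$ is then a product of at most $m$ of them, closing the induction. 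Finally, for a permutation of width $w$ every inversion $(i,j)$ satisfies $-(w-1)\le\pi_j-i\le w-1$, hence $\#M\le 2w-1$ and Strang's bound drops out; the infinite and cyclic cases run through the same induction with $M$ interpreted in the evident way.

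The main obstacle is the displayed inequality. Multiplying by a single $s_k$ on the right both deletes the inversion at positions $(k,k+1)$ and shifts the quantity $\pi_j-i$ by $\pm 1$ for the inversions whose value-index straddles those positions; the analogous thing happens on the left for $s_v$. So a value removed from $M$ may a priori be re-created by such a shift coming from elsewhere in $\pi$ — indeed one-sided peeling alone need not lower $\#M$ at all (for $\pi=3412$ no single bandwidth-1 factor, on either side, decreases $\#M$), which is precisely why a two-sided step is required. Establishing $\#M(\lambda\pi\mu)\le\#M(\pi)-2$ therefore demands a careful simultaneous accounting of all created and destroyed inversion-values for the two-sided operation $\sigma\mapsto\lambda\sigma\mu$, organized according to how a given inversion meets the swapped positions and the swapped values, with (i)--(ii) pinning down that (morally) $\min M$ and $\max M$ are the elements that vanish. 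I expect this bookkeeping to be the technical core of the argument.
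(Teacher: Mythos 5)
Your overall strategy (two‑sided induction, peeling one bandwidth‑1 factor off each end of $M$ per step) is genuinely different from the paper's, but as written it has a real gap: the inequality $\#M(\lambda\pi\mu)\le \#M(\pi)-2$, which you correctly identify as the heart of the argument, is never proved — you only announce that the bookkeeping should work out. This is not a routine verification. As you yourself observe, right multiplication by $s_k$ shifts the quantity $\pi_j-i$ by $\pm 1$ for every inversion whose \emph{larger} entry sits at position $k$ or $k+1$, and left multiplication by $s_v$ does the symmetric thing for inversions whose smaller entry is $v$ or $v+1$; these shifts can both create and destroy elements of $M$ anywhere in the interval $[\min M,\max M]$, and can in particular re‑create the extreme values you are trying to kill. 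Your own example $3412$ shows that one‑sided peeling can fail outright, so the claim that the two‑sided version always succeeds needs an actual argument, not an expectation. The base case $m=1$ (that $\#M=1$ forces $\pi$ to be a product of nonadjacent simple transpositions) is likewise only asserted; it is true and provable from your extremal observations (i) and (ii), since for $m=1$ every inversion must be simultaneously position‑adjacent and value‑consecutive, but you should say so. Until the displayed inequality is established, the induction does not close.

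For comparison, the paper sidesteps this accounting entirely with a single global construction: it draws the ``hook diagram'' of $\pi$ (hooks joining $(i,0)$, $(i,\pi_i)$, $(0,\pi_i)$), observes that the crossing of hooks $i<j$ with $\pi_i>\pi_j$ occurs at the grid point $(i,\pi_j)$, hence on the diagonal $c-r=\pi_j-i\in M$, checks that the transposition indices occurring on a fixed diagonal differ by at least $2$ so that each diagonal yields one bandwidth‑1 factor, and reads off a reduced decomposition $\pi=\sigma^{-n+1}\cdots\sigma^{n-1}$ with exactly $\#M$ nontrivial factors. That route produces all $m$ factors at once and requires no control over how $M$ changes under multiplication. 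If you want to salvage your induction, the cleanest fix is probably to prove the stronger statement that $M(\lambda\pi\mu)\subseteq M(\pi)\setminus\{\min M,\max M\}$ (not merely a cardinality bound), by showing that every inversion of $\lambda\pi\mu$ comes from an inversion of $\pi$ whose value either is preserved or moves strictly inward; but that containment is exactly the delicate part and must be argued case by case.
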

If $\pi$ is a permutation of bandwidth $w$ then for the elements of $M$ we have that $\pi_j-i<\pi_i-i\leq w$ and $\pi_j - i> \pi_j - j\geq -w$, so $M\subset \{-w+1,\ldots,w-1\}$. Thus $M$ has no more than $2w-1$ elements and the conjecture follows immediately.
\begin{cor}[\textbf{Strang's conjecture}]\label{Strangsconj}
If $\pi$ is a permutation of bandwidth $w$ then there exist at most $2w-1$ bandwidth 1 permutations whose product is $\pi$. Moreover,the bound $2w-1$ is exact.
\end{cor}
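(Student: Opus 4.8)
The upper bound in Corollary~\ref{Strangsconj} is immediate from Theorem~\ref{mainthrm} and the inclusion $M\subseteq\{-w+1,\dots,w-1\}$ noted above, so the real content of ``the bound $2w-1$ is exact'' is that some bandwidth-$w$ permutation genuinely needs $2w-1$ factors. The plan is to exhibit one such permutation, the block swap $\pi$ on $\{1,\dots,2w\}$ with $\pi_i=i+w$ for $i\le w$ and $\pi_i=i-w$ for $i>w$, i.e.\ $\pi=(w+1,w+2,\dots,2w,1,2,\dots,w)$ in one-line notation. First I would record the easy facts: $|\pi_i-i|=w$ for every $i$, so $\pi$ has bandwidth exactly $w$; and the inversions of $\pi$ are precisely the pairs $i\le w<j$ (indeed $\pi_i=i+w>j-w=\pi_j$ for all such pairs), each contributing $\pi_j-i=(j-i)-w$, so $M=\{-w+1,\dots,w-1\}$ and $m=2w-1$. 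Thus Theorem~\ref{mainthrm} already supplies the matching upper bound $\pi=\rho^1\cdots\rho^{2w-1}$.

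For the lower bound I would read a factorization $\pi=\rho^1\rho^2\cdots\rho^k$ as a $k$-round process on a row of $2w$ tokens: in round $j$ one applies, as physical transpositions of contents of adjacent cells, the mutually nonadjacent simple transpositions constituting $\rho^j$, carrying the identity arrangement to $\pi$. Two observations then do the work. Locally: since a bandwidth-$1$ permutation is, by definition, a product of \emph{pairwise nonadjacent} transpositions, no single $\rho^j$ contains both the transposition $(w-1,w)$ and the transposition $(w,w+1)$; hence $J_w:=\{j:\rho^j\text{ contains }(w,w+1)\}$ and $J_{w-1}:=\{j:\rho^j\text{ contains }(w-1,w)\}$ are disjoint. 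Globally: fixing a gap between cells $p$ and $p+1$, the set $S_j$ of tokens occupying cells $1,\dots,p$ after round $j$ changes only in rounds $j$ whose $\rho^j$ contains $(p,p+1)$, and in each such round exactly one token leaves $\{1,\dots,p\}$; since $S_0=\{1,\dots,p\}$ and $S_k=\{\pi_1,\dots,\pi_p\}$, the number of such rounds is at least $\#\big(\{1,\dots,p\}\setminus\{\pi_1,\dots,\pi_p\}\big)$.

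Evaluating at our $\pi$: for $p=w$ this lower bound is $\#\big(\{1,\dots,w\}\setminus\{w+1,\dots,2w\}\big)=w$, and for $p=w-1$ it is $\#\big(\{1,\dots,w-1\}\setminus\{w+1,\dots,2w-1\}\big)=w-1$. Combining with $J_w\cap J_{w-1}=\emptyset$ gives $k\ge \#J_w+\#J_{w-1}\ge w+(w-1)=2w-1$. Together with the upper bound, the minimal number of bandwidth-$1$ factors of this $\pi$ is exactly $2w-1$, which is the assertion of the corollary.

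The essential idea here is that the token-crossing demands at the two \emph{adjacent} gaps $\{w-1,w\}$ and $\{w,w+1\}$ cannot be met in a single shared round; this is precisely what lifts the answer from the naive displacement bound $w$ (which is all a single token forces) up to $2w-1$. I expect the only delicate points to be bookkeeping ones: fixing the convention under which ``the tokens in cells $1,\dots,p$'' evolves correctly under the matrix product $\rho^1\cdots\rho^k$ (so that a factor $(p,p+1)$ really acts as a transposition of cell contents), and observing that the bound $\#J_p\ge\#(\{1,\dots,p\}\setminus\{\pi_1,\dots,\pi_p\})$ is robust to tokens oscillating back and forth across the gap, since one appeals only to the \emph{net} number of tokens that must leave $\{1,\dots,p\}$ and each round in $J_p$ removes exactly one token from it.
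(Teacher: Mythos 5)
Your proof is correct, and while the upper bound and the choice of extremal permutation (the block swap $(w+1)(w+2)\cdots(2w)12\cdots w$) coincide with the paper's, your argument for the exactness of the bound is genuinely different. The paper first shows that a minimal factorization into bandwidth-$1$ permutations can be refined so that the underlying word in simple transpositions is reduced (by cancelling pairs of repeated crossings in the associated wiring diagram), then argues that any wiring diagram of this $\sigma$ is ambiently isotopic to a $w\times w$ grid rotated by $45^{\circ}$, in which a path joining the left endpoint $w+1$ to the right endpoint $\sigma_1=w+1$ meets $2w-1$ intersection points while each bandwidth-$1$ factor, drawn as a single vertical slice, can supply at most one of them. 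Your argument replaces all of this with a direct count of how many factors must contain the transposition $(p,p+1)$ for $p=w$ and $p=w-1$: the quantity $\#\bigl(\{1,\dots,p\}\setminus S_j\bigr)$ increases by at most one per factor containing $(p,p+1)$ and is unchanged otherwise, giving $\#J_w\ge w$ and $\#J_{w-1}\ge w-1$, and pairwise nonadjacency within a single factor forces $J_w\cap J_{w-1}=\emptyset$. What your route buys is that it is entirely combinatorial and self-contained: it needs neither the reduction to reduced decompositions nor the topological claims (``homotopy equivalent'', ``ambiently isotopic'') that the paper leaves at the level of assertion, and it isolates cleanly why the answer is $2w-1$ rather than the naive displacement bound $w$ --- two adjacent gaps whose crossing demands cannot be served in the same round. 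What the paper's route buys is continuity with the hook/wiring-diagram machinery already built for Theorem \ref{mainthrm}. The bookkeeping point you flag --- which side-convention makes a factor containing $(p,p+1)$ act as a swap of cell contents --- is indeed harmless here, since each $\rho^j$ is an involution and your $\pi$ satisfies $\pi=\pi^{-1}$, so the argument can be run on the product read in either order.
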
 

A possible extension of banded matrices, also considered by Strang in \cite{Strang2}, are infinite and cyclically banded matrices. Cyclically banded matrices are $n\times n$ matrices $A$, such that $A_{i,j} =0$ if $w<\,|\,i-j\,|\,<n-w$. Here any matrix would have width $n/2$ and so we will require that $w\leq n/2$. In Section \ref{section:cyclic} we consider the analogous question referring to their factorization into cyclically banded matrices of bandwidth 1.

We will use the notion of reduced decomposition of a permutation and its visualization called a wiring diagram. We would like to thank Alex Postnikov for suggesting their use. The proofs will rely on the construction of special wiring diagram we call a hook wiring diagram.

\section{Hook wiring diagrams}

We will consider the simple generators of the symmetric group $S_n$ as a Coxeter group. We will represent a permutation as a certain product of such simple transpositions which will be grouped into the desired bandwidth 1 factors. 

A simple transposition $s_i=(i,i+1)$ exchanges the $i$th and $(i+1)$st element. As an element of the symmetric group $S_n$, $s_i$ is equal to the permutation $1,2,\ldots, i-1, i+1, i, i+2,\ldots, n$. Note that $i$ determines the transposition, we will say that  \textbf{$i$ is the index of the transposition $s_i$}. A reduced decomposition of $\pi$ is a product $s_{i_1}s_{i_2}\cdots s_{i_l}=\pi$ of such transpositions of minimal possible length $l$; see \cite{Coxeter} for the general facts. It follows by inspection of the possible cases that every bandwidth 1 permutation is a product of mutually nonadjacent simple transpositions $s_i$, where two transpositions are adjacent if  their indices are consecutive numbers.

A \textbf{wiring diagram} (originally appearing in \cite{Goodman}) of a reduced decomposition $s_{i_1}s_{i_2}\cdots s_{i_l}=\pi$ is a planar configuration of $n$ (pseudo-)lines $L_1,\ldots,L_n$ between two columns of the numbers $1,2,\ldots,n$ with the following properties:
\begin{itemize}
\item Line $L_i$ starts at $i$ and ends at $\pi_i$.
\item No two lines intersect more than once and no three lines intersect at a point.
\end{itemize}
 Each wiring diagram depicts a reduced decomposition $s_{i_1}s_{i_2}\cdots s_{i_l}=\pi$ via the following correspondence.
Through every intersection between the lines draw a perpendicular ``dashed'' line (as in Figure \ref{example}). Counting from left to right assign to the $k$th dashed line the simple transposition $s_{i_k}=(i_k,i_{k+1})$, whose index $i_k$ is equal to 1 plus the total number of (pseudo-)lines $L_r$ that cross that dashed line above the intersection point. 

\begin{figure}[ht!]
\centering
  \includegraphics[width=3in]{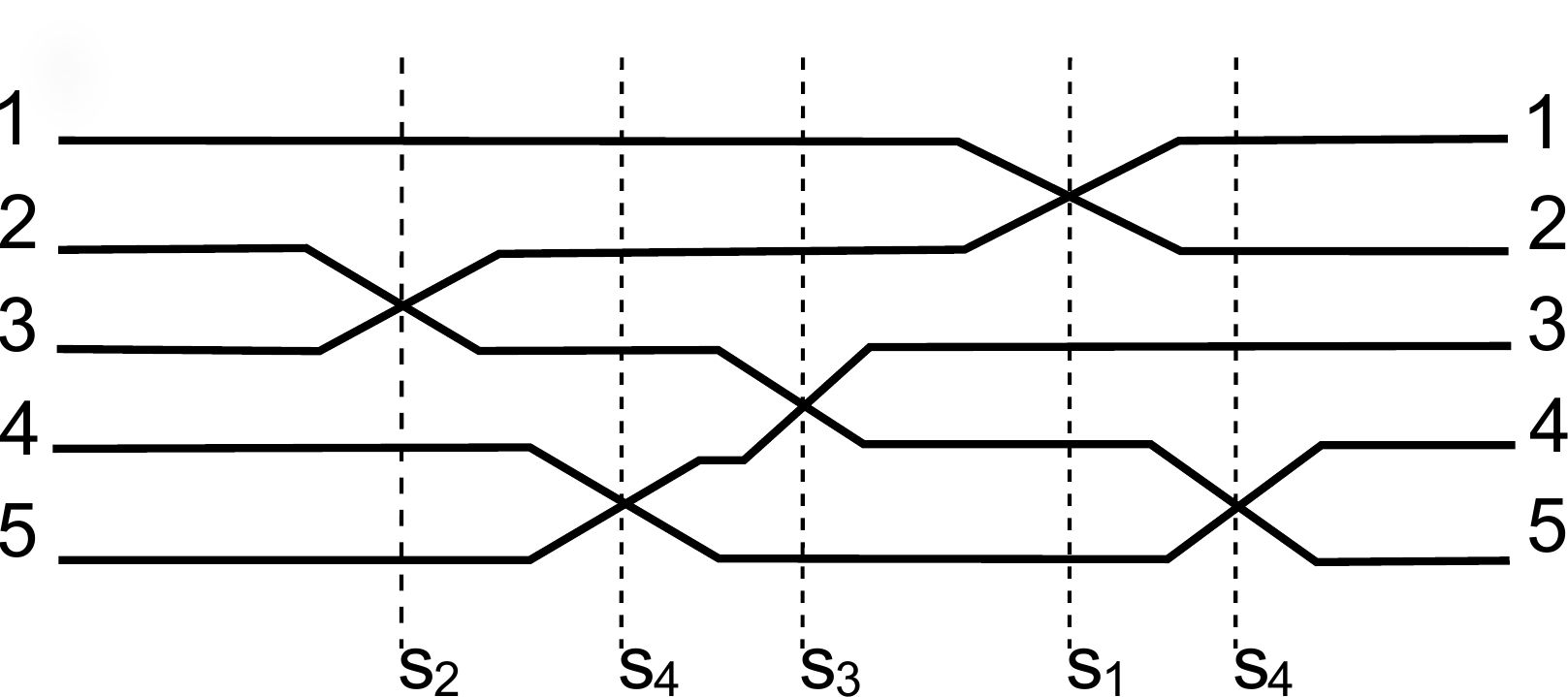}
\caption{ Wiring diagram of $\pi=s_2s_4s_3s_1s_4 = 25143$. }
\label{example}
\end{figure}

Figure \ref{example} shows the wiring diagram for $\pi = (2,3)(4,5)(3,4)(1,2)(4,5)$. Notice that line $L_i$ ``carries" the number $i$. A thin vertical slice of a wiring diagram represents an intermediate permutation with the position of $i$ being the relative vertical position of $L_i$ with respect to the other lines at this slice. Two lines crossing simply means that we exchange two adjacent entries. The number of lines vertically above that crossing plus 1 is exactly the index $i$ of the corresponding simple transposition $s_i=(i,i+1)$.

\begin{figure}[ht!]
\centering
  \includegraphics[width=2.7in]{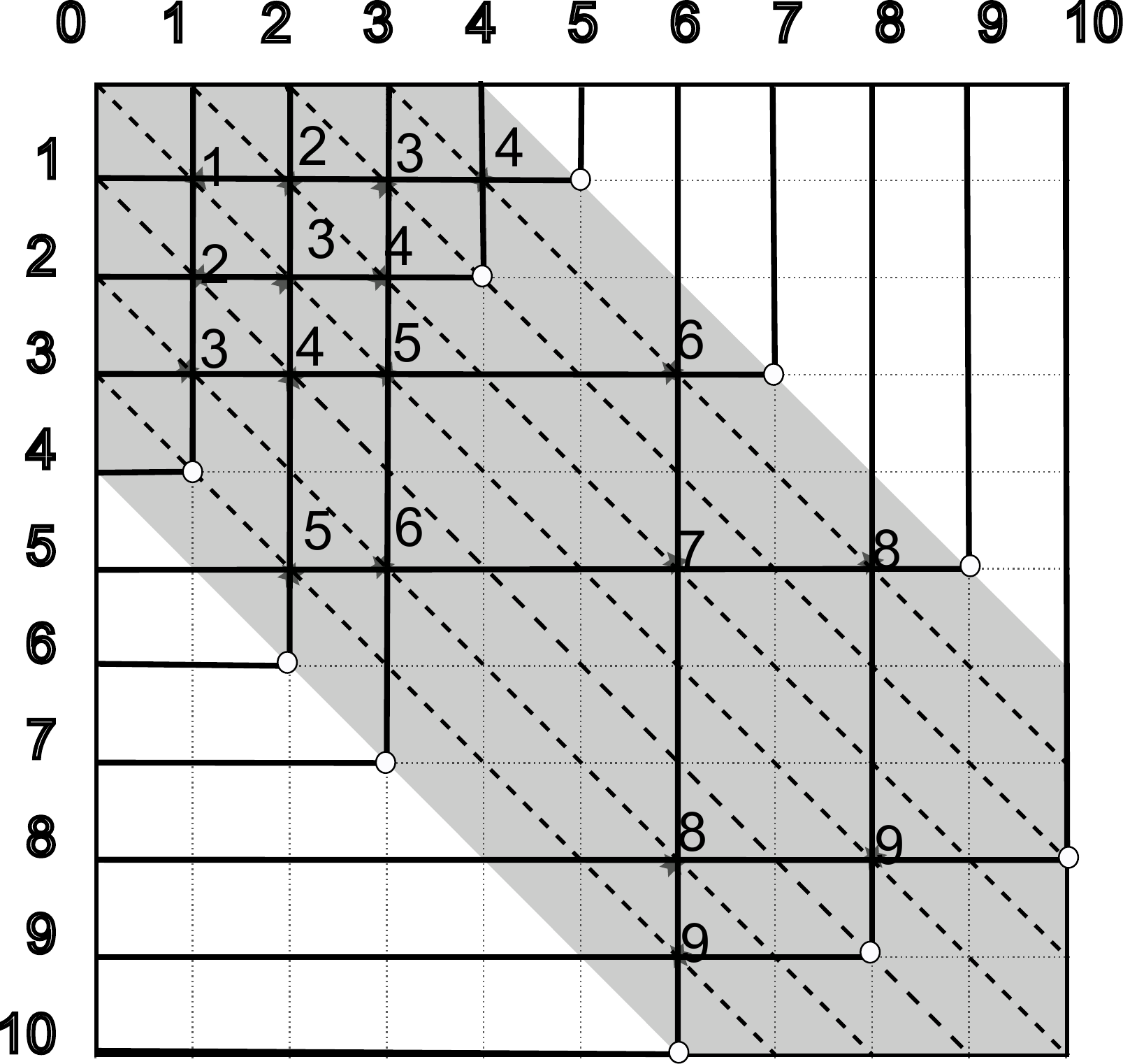} 
  \caption{Hook diagram for the permutation $\pi=5$,$4$,$7$,$1$,$9$,$2$,$3$, $10$,$8$,$6$ of bandwidth $w=4$. The dashed lines depict the diagonals $c-r=-3,-2,-1,0,1,2,3$. The numbers at the intersections are the indices of the corresponding simple transposition, e.g. $2$ corresponds to $s_2=(2,3)$.}\label{bandedexample}
\end{figure}

For any permutation $\pi$ we can also draw (see Figure \ref{bandedexample}) what we'll call a \textbf{hook diagram}. Consider a square grid bounded by $(0,0)$ in the top left corner and vertical and horizontal rays marked with $1,2,\ldots$ going down and to the right following the indexing convention for matrices; so that a point of coordinates $(r,c)$ is at the $r$-th row (counting from the top) and $c$-th column (counting left to right). Place a dot at the points $(i,\pi_i)$ on the grid and connect $(i,\pi_i)$ with $(i,0)$ and $(0,\pi_i)$ by two segments. This way the dots would be at the places of the ones in the permutation matrix of $\pi$ and each $i$ will be connected to the corresponding $\pi_i$ by a hook with corner at the dot $(i,\pi_i)$.
 
\begin{figure}[ht!]
\centering
  \includegraphics[width=1.5in]{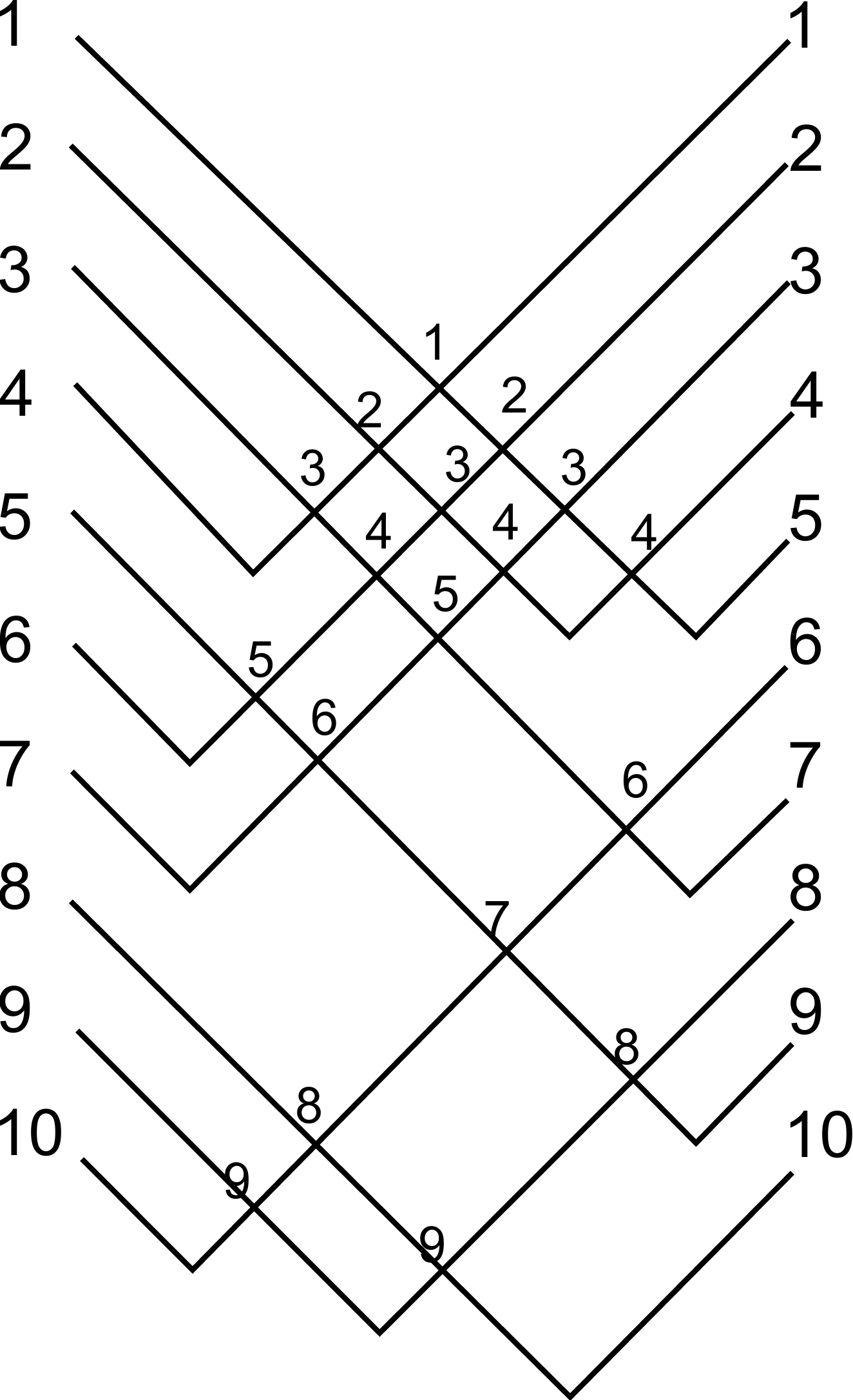}
  \caption{The wiring diagram obtained from the hook diagram for the permutation $\pi=5,4,7,1,9,2,3,10,8,6$. The numbers at the intersections indicate the indices of the corresponding simple transposition.}\label{bandedwiring}
\end{figure}

Notice that a hook diagram turns readily into a wiring diagram by extending the horizontal segments through $(0,i)$ and the vertical segments through $(j,0)$, then rotating by $-45^{\circ}$ as shown in Figure \ref{bandedwiring}. The line $L_i$ would be the rotated extended hook through the points $(0,i),(\pi_i,i),(\pi_i,0)$. To determine the index of the simple transposition corresponding to a crossing of $L_i$ and $L_j$ we need to count the number of lines in a thin strip vertically above that crossing in the rotated extended diagram. Assume $i<j$, so since $L_i$ and $L_j$ cross we must have $\pi_i>\pi_j$. The number of lines above the crossing and hence the index will be 
$r=i+\pi_j - \#\{p:p<i, \pi_p<\pi_j\}+1$, so the transposition is $s_r=(r,r+1)$. 

\begin{proof}[Proof of Theorem \ref{mainthrm}]

Draw the hook diagram of $\pi$ and interpret it as a wiring diagram as described above. 
We can now read off a reduced decomposition from the hook-wiring diagram as follows. To every intersection of two hooks assign the number $i$ of the corresponding simple transposition as explained above. 

Let $\sigma^{k}$ be the product of the transpositions on the $k$-th diagonal, i.e. $c-r=k$ (see Figure \ref{bandedexample}), where $k=-n+1,\ldots,n-1$.
We have $\sigma^{k} = s_{i_1}\cdots s_{i_l}$ where $i_1,\ldots,i_l$ are the numbers(indices) at the crossings on the $k$th diagonal. These numbers are at least 2 apart, so we have  $\sigma^{k}_i = i$ if $i\not \in \{i_1,i_1+1,\ldots,i_l,i_l+1\}$, $\sigma^{k}_{i_j}=i_j+1$ and $\sigma^{k}_{i_j+1}=i_j$ for $j=1,\ldots,l$. Then $\sigma^{k}$ is of bandwidth $1$. The diagram gives a reduced decomposition of $\pi$, where the transpositions corresponding to intersections on the same diagonal $c-r=k$ appear before the ones on the next diagonal, while the relative order of these transpositions on the same diagonal does not matter since they commute. Since their product is $\sigma^k$ we have $\pi = \sigma^{-n+1}\sigma^{-n+2}\cdots \sigma^{n-1}$. Notice that $\sigma^k$ is not trivial if and only if there is an intersection on this diagonal. There is an intersection between $L_i$ and $L_j$, $(i<j)$, if and only if $\pi_i>\pi_j$, and the intersection point is $(i,\pi_j)$. Thus diagonal $k$ has an intersection if and only if $k\in M$ and the number of nontrivial bandwidth 1 factors is equal to the cardinality of $M$, $m=\#M$. If $M=\{k_1,\ldots,k_m\}$, then the nontrivial factors are $\sigma^{k_i}$ for $i=1,\ldots,m$ and we set $\rho^i = \sigma^{k_i}$ to obtain $\pi=\rho^1\rho^2\ldots$ as desired.
\end{proof}
\begin{proof}[Proof of Corollary \ref{Strangsconj}]
As remarked earlier, if $\pi$ is banded then $M\subset\{-w+1,\ldots,w-1\}$, so $\# M\leq 2w-1$ and Strang's conjecture follows from Theorem \ref{mainthrm}.

To show that $2w-1$ is the exact bound, consider the permutation $\sigma=(w+1)(w+2)\ldots(2w)123\ldots w \ldots$ of width $w$, where the last $\ldots$ mean the identity $\sigma_i=i$ for $i>2w$. Before we show that this particular $\sigma$ cannot be factored into less than $2w-1$ permutations of bandwidth 1, we need to make a few general observations.
\begin{figure}[ht!]
\centering
 \includegraphics[height=1.5in]{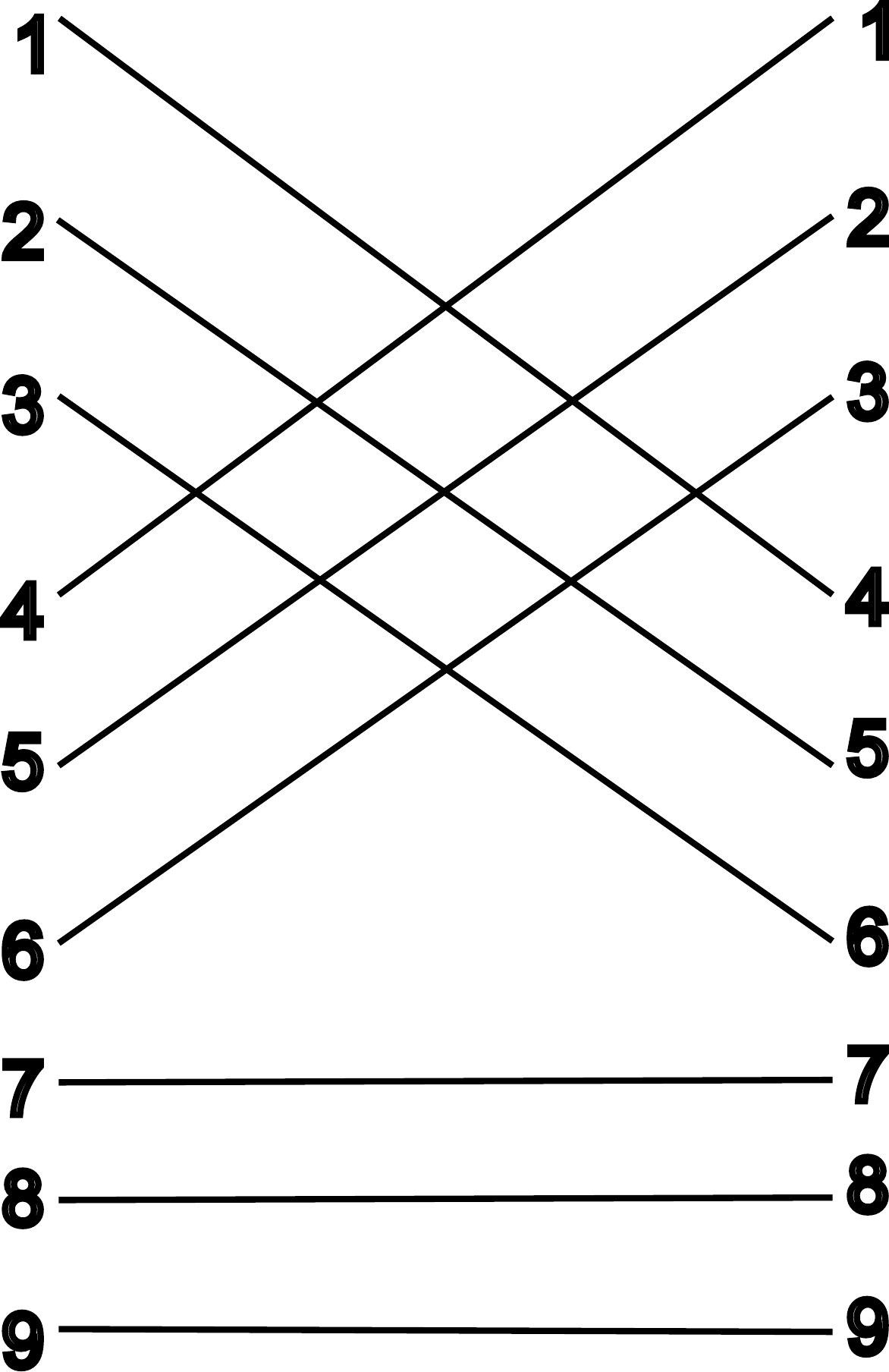}
\caption{ Any wiring diagram of $\sigma = (w+1)\ldots(2w)12\ldots w \ldots$ is homotopy equivalent to this one. Here $w=3$ and $\sigma = 456123789$.}\label{extreme}
\end{figure}

For any permutation $\pi$, let $k$ be the minimal number for which $\pi=\pi^{(1)}\cdots \pi^{(k)}$, where $\pi^{(i)}$ are permutations of bandwidth 1. Then there exists a reduced decomposition of $\pi=s^{(1)}_{i_1}s^{(1)}_{i_2}\cdots s^{(k)}_{i_l}$, such that $\pi^{(i)}$ is the product of the $i$th block of transpositions. We will show that there is always such reduced decomposition by decreasing the number of simple transpositions in it. Writing $\pi^{(i)}=s_{i_1}\cdots s_{i_m}$ as a product of transpositions we still have a decomposition of $\pi$ into simple transpositions. We can depict this decomposition graphically like a wiring diagram, without requiring that two lines intersect at most once. The assertion that the decomposition of $\pi^{(i)}=s_{i_1}\cdots s_{i_m}$ is not reduced is equivalent to two lines $L'$ and $L''$ intersecting at least twice at places $r$ and $p$ corresponding to $s_{i_r}$ and $s_{i_p}$. Let $L'=A'B'C'$ and $L''=A''B''C''$ where $A,B,C$ are the portions of the lines obtained after cutting at the two intersections. Substituting $L'$ and $L''$ with $A'B''C'$ and $A''B'C''$ respectively gives us another wiring diagram of $\pi$ for the decomposition $\pi=s_{i_1}\cdots \hat{ s_{i_r} } \cdots \hat{s_{i_p}} \cdots s_{i_m}$. Removing $s_{i_r}$ and $s_{i_p}$ from the $\pi^{(i)}$s to which they belonged gives another factorization of $\pi$ into at most $k$ permutations of width 1 with a smaller number of simple transpositions. Continuing this way we will reach the length $l$ of $\pi$ forcing the underlying decomposition into simple transpositions to be reduced. 

We can thus assume that our particular $\sigma=\sigma^{(1)}\cdots \sigma^{(k)}$ gives a reduced decomposition. Consider its wiring diagram as depicted in Figure \ref{extreme}: since the transpositions in each $\sigma^{(i)}$ are nonadjacent we can draw the corresponding intersections on the same vertical line. Thus every path from some $i$ to some $\sigma_j$ will pass through at most $k$ intersections.


Notice that any wiring diagram of $\sigma$ can be deformed (is ambiently isotopic) to a  $w\times w$ grid rotated $45^{\circ}$ like the diagram on Figure \ref{extreme}. Then every path joining $w+1$ with $\sigma_1=w+1$ has $2w-1$ intersection points and so $k\geq 2w-1$. 
\end{proof}

Since our proof is constructive, it leads  to  an algorithm for the decomposition: find the intersection points in the hook diagram and group them according to the diagonal to which they belong.

Let $I_k$ be the set of intersection points on the $k$th diagonal. Assume the inverse permutation $\pi^{-1}$ is known. Then the procedure is as follows:

\indent For $i$ from $1$ to $n$:

\indent \indent $p:=\pi_i$

\indent \indent For $j$ from $1$ to $p-1$:

\indent \indent  \indent If $\pi^{-1}_j>i$, then $I_{j-i} \leftarrow (i,j)$ 

In order to determine which transposition these intersections correspond to, notice that the number of lines $L_r$ intersecting the segment between $(i,j)$ and the origin, and thus the index of the transposition, is $i-1+j-1-\#\{t\mid t <i \,,\,\pi_t <j\}$, which we can count within this algorithm also. Let $s[i,j]=\#\{t\mid t <i \,,\,\pi_t <j\}$, then 
$$s[i,j+1]=s[i,j] + ((\pi^{-1}_j<i)),$$
where $((statement))$ denotes the logical value $0/1$ of the statement. 

\section{Infinite and cyclically banded permutations.}\label{section:cyclic}

We now consider an extension of banded matrices and their corresponding permutations $\pi=\pi_1\ldots\pi_n$, as defined in Section \ref{section:intro}. In this case bandwidth 1 encompasses more permutations and thus allowed factors.  We have a simple transposition $s_0=s_n$ exchanging $\pi_1$ and $\pi_n$, and corresponding to the bandwidth 1 cyclic matrix $A$ with $A_{1n}=A_{n1}=1$. The other additional factor is the shift $S$, acting by cyclic shift on $\pi$ as $S\pi=\pi_2\pi_3\ldots\pi_n\pi_1$, with corresponding matrix $S$ given by $S_{i,i+1}=1$ and 0 otherwise. A bandwidth 1 permutation is either a shift $S$ or $S^{-1}$ or a product of pairwise nonadjacent modulo $n$ simple transpositions $s_0,\cdots,s_{n-1}$. 

\begin{figure}[ht!]
\centering
 $A = \left[\begin{matrix} 0 &0 &0 &0 & 0& 1\\0&0&0&0&1&0\\0&1&0&0&0&0\\0&0&0&1&0&0\\0&0&1&0&0&0\\1&0&0&0&0&0\end{matrix}\right]$, 
 $B=\begin{tabular}{c@{}c@{\hspace{12pt}}cccccc@{\hspace{12pt}}c@{}c}
\dots &0 &1&0&0&0&0&0 &0     &\dots\\[10pt]

0      &1      &0&0&0&0&0&0 &0     &0\\
1      &0      &0&0&0&0&0&0 &0     &0\\
0      &0      &0&1&0&0&0&0 &0     &0\\
0      &0      &0&0&0&1&0&0 &0     &0\\
0      &0      &0&0&1&0&0&0 &0     &0\\
0      &0      &0&0&0&0&0&0 &1     &0\\[10pt]

0      &0      &0&0&0&0&0&1 &0     &0\\
\dots&0      &0&0&0&0&1&0 &0  &\dots
\end{tabular}$
\caption{ The matrix $A$ of the cyclic banded permutation $\pi = 652431$ of bandwidth $w=3$ and the corresponding infinite $B=\phi(A)$.}\label{cyclicmatrix}
\end{figure}

A cyclic banded matrix can also be interpreted as a doubly infinite periodic matrix of period $(n,n)$ in the following way.  For any $n\times n$ matrix $A$ define $\phi(A) = B$, where $B$ is a doubly infinite matrix given by 
$B_{i,j}=A_{i \pmod{n},j \pmod{n}}$ for $\,|\,i-j\,|\,\leq n/2$ and 0 otherwise, see Figure \ref{cyclicmatrix} for an example. The map $\phi$ from cyclic banded matrices of bandwidth $w$ to banded doubly infinite matrices of period $(n,n)$ is an isomorphism. We will thus consider the problem of factorization of banded doubly infinite periodic matrices and their corresponding infinite permutations.

\begin{figure}[ht!]
\centering
\includegraphics[height=2in]{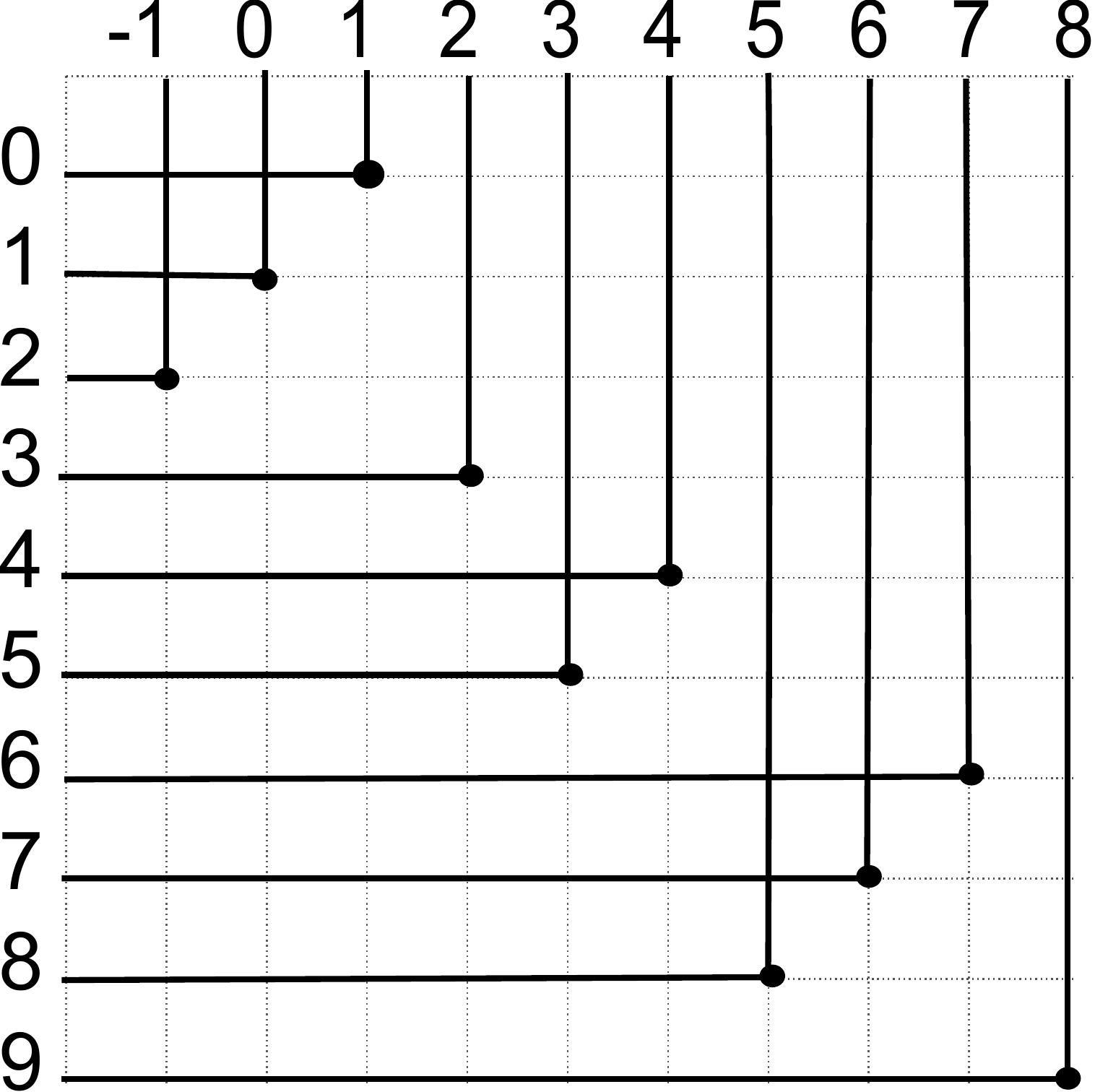} \qquad
\includegraphics[height=2in]{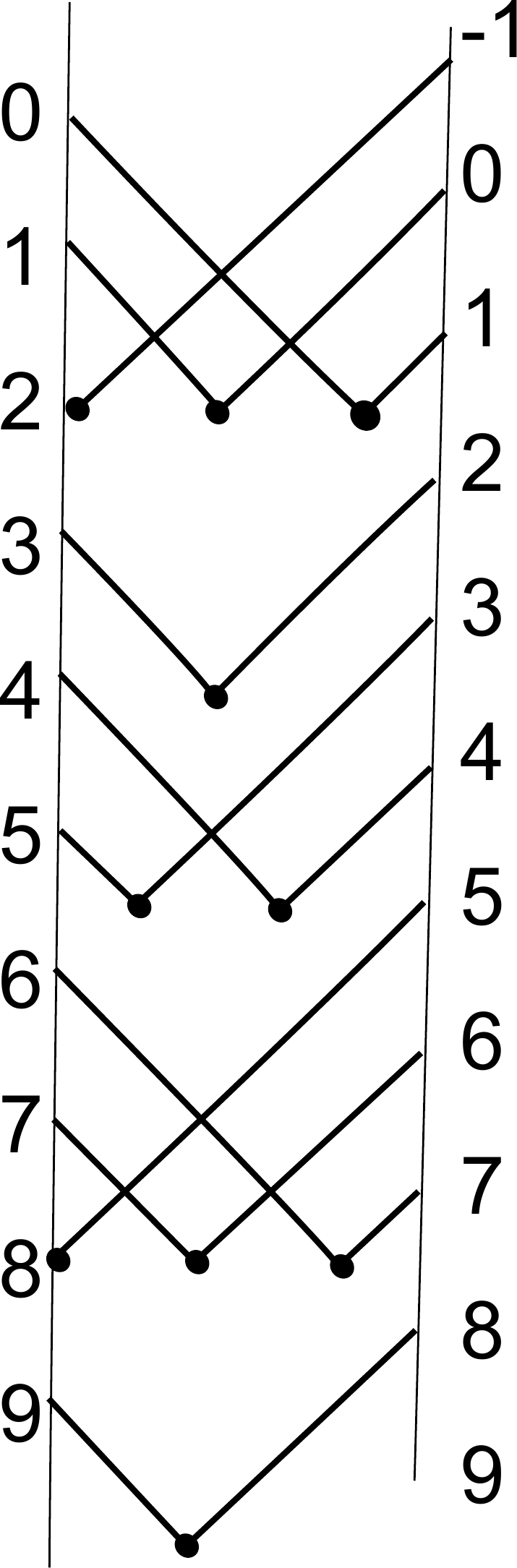}
 \qquad \includegraphics[height=2in]{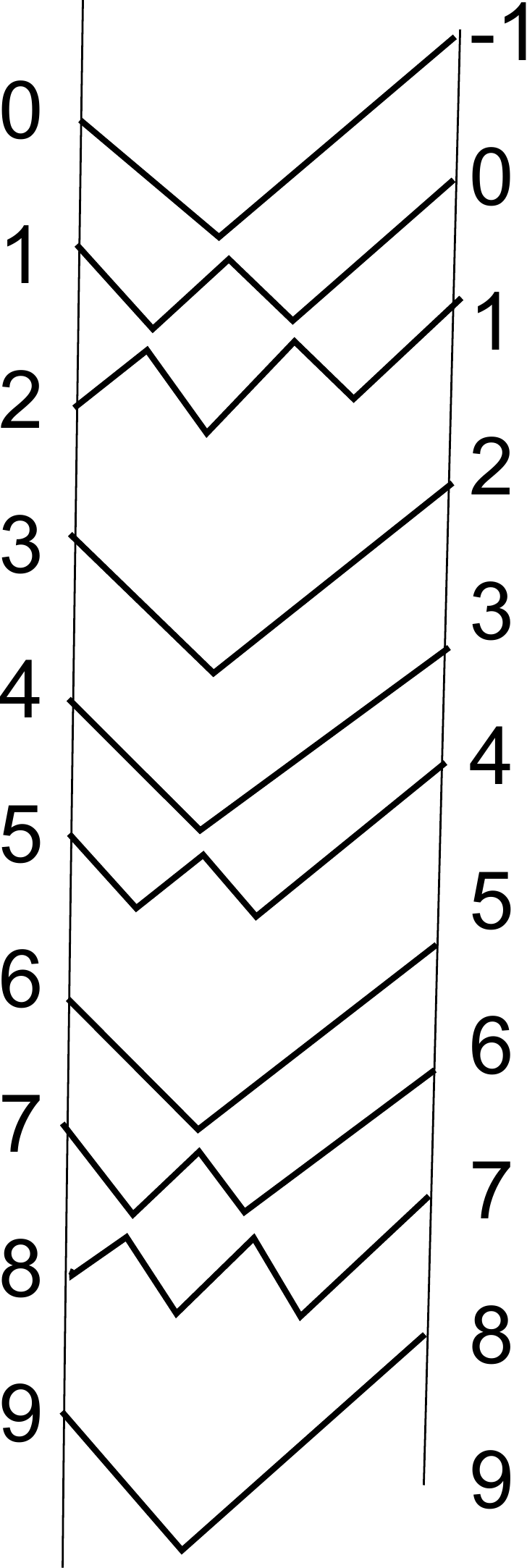}
\caption{ The hook diagram for $\pi=652431$, produced from its infinite matrix $B$, the corresponding wiring diagram which gives factors $\sigma^1=s_1, \sigma^2=s_0s_4,\sigma^3=s_1$ and the resulting trivial wiring diagram after ``untangling'' giving a shift $S^1$.}\label{cyclicwiring}
\end{figure}

We can now form the hook wiring diagram of the infinite periodic banded permutation matrix as in the previous  Section  and as it is shown on Figure 
\ref{cyclicwiring}. Again the intersections on each diagonal correspond to a bandwidth 1 factor and we can proceed to ``untangle'' them, i.e. multiply by simple transpositions. Let $\sigma^i$ be the product of the simple transpositions corresponding to intersections on the $i$th diagonal, where each transposition is represented by its infinite periodic matrix and thus the number of these transpositions will be finite. After multiplication by $\sigma^1\cdots\sigma^{2w-1}$ we will have a trivial wiring diagram where no lines intersect. Unlike in the finite case where a monotone bijective map $[1,\ldots,n]\rightarrow [1,\ldots,n]$ must be the identity, in the infinite case of $\mathbb{Z}$ these could be shifts, so the permutation/matrix corresponding to a trivial wiring diagram will be $S^k$ for some $k$. We will now show that $k\leq w$.


Define the 
\textbf{shifting index} of an infinite periodic permutation matrix $B$ as follows. Let $p:= \# \{ B_{i,j}=1\,|\,i \leq n \text{ and } n+1\leq j \}$ and $q:=\# \{ B_{i,j}=1\,|\, n+1\leq i \text{ and }  j \leq n\}$. Alternatively these are the the number of ones in the upper right $w\times w$ triangle of the original $A$ and the number of ones in the lower left such triangle.
Let the \textbf{shifting index} of $B$ be $\si(B) := p-q$. We have that $\si(\id)=0$, $\si(S)=1$, $\si(S^{-1})=-1$. We also have that $\si(SB)=1+
\si(B)$ and $\si(S^{-1}B)=-1+\si(B)$, since $S$ acts on $B$ by shifting one row upwards and the entry 1 on row $n+1$ moving to row  $n$ either decreases $q$ by 1 or increases $p$ by 1. Thus every permutation matrix factorizes uniquely as $B = S^{\si(B)}\bar{B}$, where $\si(\bar{B})=0$. Moreover, for any simple transposition $s_i$ and its corresponding infinite matrix $E_i$, we have $\si(E_i)=0$ and $\si(E_iB)=\si(B)$. This can be checked by inspection, $i=0\pmod n$ is the only nontrivial case, where it is still obvious that $p-q$ is preserved after switching row $n$ and $n+1$ in $B$. Also $SE_iS^{-1} = E_{i-1}$, where the indexing is modulo $n$ again. 

Using the usual inversion index we can show by induction on it that every such $B$ is the product of shifts and simple transpositions (note that any permutation matrix without inversions is a diagonal of 1s, i.e. $S^k$ for some $k$).
Thus we can always write $B = S^{\si(B)}E_{i_1}\cdots E_{i_l}$. This also shows that $\si(AB) =\si(A)+\si(B)$, under repeated application of $E_iS=SE_{i+1}$.   

For a matrix $B$ of bandwidth $w$ we have that $p\leq w$ and $q \leq w$, thus $-w\leq \si(B) \leq w$. Moreover, $B=\sigma^1\cdots\sigma^{2w-1} S^k$, so $\si(B) = \si(\sigma^1)+\cdots+\si(\sigma^{2w-1})+k=k$,  and thus $\,|\,k\,|\,\leq w$. We thus have the following analogue of Strang's conjecture, Corollary \ref{Strangsconj}.


\begin{theorem}\label{cyclic}
Let $\pi \in S_n$ be a cyclic banded permutation of bandwidth $w$, i.e. $\,|\,\pi_i-i\,|\,\leq w$ or $\,|\,\pi_i-i\,|\,\geq n-w$, or alternatively an infinite periodic permutation whose matrix is doubly infinite periodic with only nonzero entries in $\,|\,i-j\,|\,\leq w$. Then $\pi =\sigma^1\ldots\sigma^{2w-1}S^k$, where $\sigma^i$ is a cyclic permutation of bandwidth 1, product of nonadjacent simple transpositions, and $S$ is the cyclic shift by 1 with $\,|\,k\,|\,\leq w$.
\end{theorem}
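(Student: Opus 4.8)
The plan is to assemble the statement from the machinery already developed in this section, essentially as a bookkeeping exercise tracking the shifting index. First I would form the hook wiring diagram of the doubly infinite periodic permutation matrix $B = \phi(A)$ exactly as in Section \ref{section:intro}: place dots at $(i,\pi_i)$ for all $i \in \mathbb{Z}$ (with the periodicity $\pi_{i+n} = \pi_i + n$), draw the hooks, and interpret the result as a wiring diagram after the $-45^\circ$ rotation. Because $B$ has bandwidth $w$, every crossing lies on a diagonal $c - r = k$ with $-w+1 \le k \le w-1$, so there are at most $2w-1$ nonempty diagonals, and on each diagonal the crossing indices are at least $2$ apart modulo $n$, making the product $\sigma^i$ of the transpositions on the $i$th diagonal a bandwidth $1$ cyclic permutation (a product of pairwise nonadjacent-mod-$n$ simple transpositions). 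This is the exact analogue of the grouping in the proof of Theorem \ref{mainthrm}, the only new point being that the number of crossings on each diagonal is finite by periodicity.

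Next I would argue that after multiplying through by $\sigma^1 \cdots \sigma^{2w-1}$ the wiring diagram becomes trivial, i.e. no two lines cross. This is the key geometric step: peeling off the crossings diagonal by diagonal removes every intersection, just as in the finite case, and the surviving diagram has $n$ pairwise disjoint monotone lines. In $\mathbb{Z}$, unlike in $[1,\ldots,n]$, a monotone disjoint family of lines realizing a periodic bijection need not be the identity — it can be a pure shift $S^k$. Hence $B = \sigma^1 \cdots \sigma^{2w-1} S^k$ for some integer $k$. Translating back, $\pi = \sigma^1 \cdots \sigma^{2w-1} S^k$ with each $\sigma^i$ a cyclic bandwidth $1$ permutation.

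It remains to bound $|k|$, and here I would simply quote the shifting index computation already carried out in the excerpt. Since $\si$ vanishes on every simple transposition's infinite matrix $E_i$ and hence on each $\sigma^i$ (a product of such), and since $\si$ is additive under the repeated use of $E_i S = S E_{i+1}$, we get $\si(B) = \si(\sigma^1) + \cdots + \si(\sigma^{2w-1}) + \si(S^k) = k$. On the other hand $\si(B) = p - q$ where $p$ counts the ones in the upper-right $w \times w$ triangle of $A$ and $q$ those in the lower-left triangle, so $0 \le p \le w$, $0 \le q \le w$, giving $-w \le \si(B) \le w$ and therefore $|k| \le w$. This finishes the proof.

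I expect the main obstacle to be the geometric claim that untangling the $2w-1$ diagonal blocks genuinely leaves a trivial wiring diagram in the doubly infinite setting, rather than something which only looks trivial locally. One has to be careful that the "untangling" moves are compatible with periodicity — each $\sigma^i$ must be represented by its full infinite periodic matrix so that multiplying on the right is well defined — and that no new crossings are introduced between lines far apart along $\mathbb{Z}$. The cleanest way to handle this is to observe that the total crossing count of the hook diagram restricted to one period is finite (bounded in terms of $w$ and $n$) and strictly decreases under each untangling step, exactly as in the finite argument, so the process terminates at a crossing-free diagram; the shifting-index bookkeeping then does the rest with no further geometry required.
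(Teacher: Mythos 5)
Your proposal is correct and follows essentially the same route as the paper: build the periodic hook wiring diagram, group crossings by diagonal into the $2w-1$ bandwidth-$1$ factors, observe the untangled diagram is a shift $S^k$, and bound $|k|\leq w$ via the shifting index. No substantive differences from the paper's argument.
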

%
%

\vfil
\begin{bibdiv}
\begin{biblist}

\bib{AlbertLiStrangYu}{article}{
 author={Albert, C.},
 author={Li, C.-K.},
 author={Strang,G.},
 author={Yu, G.},
 title= {Permutations as products of parallel transpositions},
 status={ SIAM J. Discrete Math}, 
 year={2011},
 issue ={25},
 pages={1412-1417},
}

\bib{Coxeter}{book}{
   author={Bj{\"o}rner, Anders},
   author={Brenti, Francesco},
   title={Combinatorics of Coxeter groups},
   series={Graduate Texts in Mathematics},
   volume={231},
   publisher={Springer},
   place={New York},
   date={2005},
}

\bib{SamsonEzerman}{article}{
 author={Ezerman,Martianus Frederic},
 author={Samson, Michael Daniel},
 title={Factoring Permutation Matrices Into a Product of Tridiagonal Matrices},
 eprint={arXiv:1007.3467},
 year={2010},
}

\bib{Goodman}{article}{
   author={Goodman, Jacob E.},
   title={Proof of a conjecture of Burr, Gr\"unbaum, and Sloane},
   journal={Discrete Math.},
   volume={32},
   date={1980},
   number={1},
   pages={27--35},
 }

\bib{Strang1}{article}{
    author = {Strang, Gilbert},
    title = {Fast transforms: Banded matrices with banded
inverses},
    journal = {	Proc. Natl. Acad. Sciences},
    date={2010},
}

\bib{Strang2}{article}{
    author = {Strang, Gilbert},
    title = {Groups of banded matrices with banded inverses},
    journal = {Proc. Amer.Math.Soc.},
    issue={139},
    date = {2011},
    pages={4255-4264},
 }
\end{biblist}
\end{bibdiv}

\end{document}